\documentclass[12pt,a4paper]{amsart}
\usepackage[latin1]{inputenc}
\usepackage[T1]{fontenc}
\usepackage[right=3cm,left=3cm,top=3cm,bottom=3cm]{geometry}
\usepackage{ifthen}
\usepackage{tabularx}
\usepackage[all]{xy}
\usepackage{pstricks,pst-plot,pst-text,pst-tree}
\usepackage{pst-eps,pst-fill,pst-node,pst-math} 
\usepackage{multido}
\usepackage{enumerate}
\usepackage{amsmath}
\usepackage{amsthm}
\usepackage{amsfonts}
\usepackage{amssymb} 
\usepackage{ae}
\usepackage[dvips]{graphicx}
\usepackage[np]{numprint}
\usepackage{slashbox}
\usepackage{textcomp}
\usepackage[hang]{footmisc}
\usepackage[normalem]{ulem}
\usepackage{hyperref} 

\npdecimalsign{\ensuremath{.}}

\title{From the Ideal Theorem to the class number}
\author{Olivier Bordell\`{e}s}
\date{}

\pagestyle{plain}

\newcommand{\Z}{\mathbb {Z}}

\theoremstyle{theorem}
\newtheorem{theorem}{Theorem}
\newtheorem{lem}[theorem]{Lemma}
\newtheorem{coro}[theorem]{Corollary}

\theoremstyle{definition}

\theoremstyle{remark}

\makeatletter
\@namedef{subjclassname@2020}{%
  \textup{2020} Mathematics Subject Classification}
\makeatother

\frenchspacing

\begin{document}

\begin{abstract}
In this note, we provide an explicit upper bound for $h_K \mathcal{R}_K d_K^{-1/2}$ which depends on an effective constant in the error term of the Ideal Theorem.
\end{abstract}

\subjclass[2020]{11R29, 11R42}
\keywords{Class number, Dedekind zeta-function, Ideal Theorem.}

\maketitle

\section{Introduction}

Let $K$ be a number field of degree $n \geqslant 3$, signature $(r_1,r_2)$, discriminant $(-1)^{r_2}d_K$, class number $h_K$, regulator $\mathcal{R}_K$ and let $w_K$ be the number of roots of unity in $K$. Let $\kappa_K$ be the residue at $s=1$ of the Dedekind zeta-function $\zeta_K(s)$ attached to $K$. \\

Estimating $h_K$ is a long-standing problem in algebraic number theory. One of the classic way is the use of the so-called \textit{analytic class number formula} stating that
\begin{equation}
   h_K \mathcal{R}_K = \frac{w_K }{2^n} \left( \frac{2}{\pi} \right)^{r_2} d_K^{1/2} \, \kappa_K \label{e1}
\end{equation}
and to use Hecke's integral representation and the functional equation of the Dedekind zeta function to majorize $\kappa_K$. This is done in \cite{lou1,lou2} with additional properties of log-convexity of some functions related to $\zeta_K$ which enables Louboutin to reach the following bound
\begin{equation}
   h_K \mathcal{R}_K \leqslant \frac{w_K}{2} \left( \frac{2}{\pi} \right)^{r_2} \left( \frac{e \log d_K}{4n-4} \right)^{n-1} d_K^{1/2}. \label{eq:Louboutin}
\end{equation}

Let $r_K(m)$ be the $m$-th coefficient of $\zeta_K$, i.e., the number of non-zero integral ideals of $\mathcal{O}_K$ of norm $m$, and denote $\Delta_K(x)$ to be the error term in the Ideal Theorem, i.e.,
\begin{equation}
   \Delta_K(x) = \sum_{m \leqslant x} r_K(m) - \kappa_K x. \label{e2}
\end{equation}

The aim of this work is to prove the following result.

\begin{theorem}
\label{t1}
Let $K$ be an algebraic number field of degree $n \geqslant 3$ and set $\gamma_n = 214$ if $n=3$, $\gamma_n = 10$ if $n \geqslant 4$. Assume that there exist $\alpha \in \left( 0,\frac{2}{n} \right)$ and a constant $C_K \geqslant \exp \left( \max \left( \gamma_n \, , \, \alpha n + \frac{1}{4 \alpha^2}\right) \right)$ such that for $x \geqslant 1$
\begin{equation}
   \left | \Delta_K(x) \right | \leqslant C_K x^{1-\alpha} \label{e3}
\end{equation}
where $\Delta_K(x)$ is given in \eqref{e2}. Then
$$h_K \mathcal{R}_K < \frac{3w_K}{2} \left( \frac{2}{\pi} \right)^{r_2} \left( \frac{\left( \frac{1}{2 \alpha} \log C_K \right)^{n-1}}{(n-1)!} - \frac{\left( \frac{1}{2 \alpha} \log C_K \right)^{n-2}}{(n-2)!} \right) d_K^{1/2}.$$
\end{theorem}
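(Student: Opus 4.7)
The plan is to invoke the analytic class number formula \eqref{e1} to reduce the problem to an upper bound for the residue $\kappa_K$, and then to derive that bound by combining the Ideal Theorem with the coefficient-wise inequality $r_K(m)\le d_n(m)$. The latter is a standard consequence of $\zeta_K(s)\le\zeta(s)^n$ for real $s>1$, itself immediate from the Euler product: at each rational prime $p$, the local factor $\prod_{\mathfrak p\mid p}(1-N\mathfrak p^{-s})^{-1}$ is coefficient-wise dominated by $(1-p^{-s})^{-n}$, using $\sum_i f_i\le\sum_i e_if_i=n$.

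Dividing \eqref{e3} by $x$ gives, for every $x\ge 1$,
\[
\kappa_K\le\frac{1}{x}\sum_{m\le x}r_K(m)+C_Kx^{-\alpha}\le\frac{D_n(x)}{x}+C_Kx^{-\alpha},
\]
with $D_n(x):=\sum_{m\le x}d_n(m)$. The decisive choice is $x:=C_K^{1/\alpha}$, for then $C_Kx^{-\alpha}=1$ while $\log x=\alpha^{-1}\log C_K=2L$. I would then establish by induction on $n$, via $D_n(x)\le x\sum_{d\le x}d_{n-1}(d)/d$, partial summation, and $\sum_{d\le x}1/d\le\log x+1$, the clean bound $D_n(x)/x\le(\log x+n-1)^{n-1}/(n-1)!$. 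Substituting yields
\[
\kappa_K\le\frac{(2L+n-1)^{n-1}}{(n-1)!}+1.
\]

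What remains is to verify that this upper bound is strictly less than
\[
3\cdot 2^{n-1}\!\left(\frac{L^{n-1}}{(n-1)!}-\frac{L^{n-2}}{(n-2)!}\right)=\frac{3\cdot 2^{n-1}L^{n-2}(L-n+1)}{(n-1)!}.
\]
Expanding $(2L+n-1)^{n-1}=(2L)^{n-1}(1+(n-1)/(2L))^{n-1}$ and using, say, $(1+t)^{n-1}\le e^{(n-1)t}$, the left-hand side is $(1+o_L(1))\cdot 2^{n-1}L^{n-1}/(n-1)!$, whereas the right-hand side is $(3+o_L(1))\cdot 2^{n-1}L^{n-1}/(n-1)!$, so the inequality holds as soon as $L$ is large enough. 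The explicit numerical thresholds $\gamma_n=214$ (for $n=3$) and $\gamma_n=10$ (for $n\ge 4$), together with the second bound $\log C_K\ge\alpha n+1/(4\alpha^2)$, are evidently calibrated so that this polynomial inequality in $L$ holds uniformly over the admissible range of $(n,\alpha)$.

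The main obstacle will be this final explicit book-keeping: one needs every constant sharp enough that the subleading contributions in the binomial expansion of $(2L+n-1)^{n-1}/(n-1)!$, together with the additive $+1$ coming from the Ideal Theorem, are genuinely absorbed by the slack between the leading $2^{n-1}L^{n-1}/(n-1)!$ and the target $3\cdot 2^{n-1}L^{n-2}(L-n+1)/(n-1)!$. Once $\kappa_K$ has been so controlled, \eqref{e1} translates the bound directly into the stated inequality on $h_K\mathcal R_Kd_K^{-1/2}$.
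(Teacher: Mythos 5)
Your reduction is correct as far as it goes: from \eqref{e1} the theorem is equivalent to $\kappa_K<3\cdot 2^{n-1}\bigl(\frac{L^{n-1}}{(n-1)!}-\frac{L^{n-2}}{(n-2)!}\bigr)$ with $L=\frac{1}{2\alpha}\log C_K$, the inequality $\kappa_K\leqslant x^{-1}\sum_{m\leqslant x}r_K(m)+C_Kx^{-\alpha}$ follows trivially from \eqref{e2}--\eqref{e3}, and $r_K(m)\leqslant\tau_n(m)$ together with the divisor bound does give $\kappa_K\leqslant\frac{(M+n-1)^{n-1}}{(n-1)!}+1$ with $M=2L=\alpha^{-1}\log C_K$ at $x=C_K^{1/\alpha}$. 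The gap is in the final step, which you defer to ``book-keeping'': the required inequality $(M+n-1)^{n-1}+(n-1)!<3M^{n-2}(M-2n+2)$ is \emph{false} in part of the admissible range. The hypotheses only force $M\geqslant\max\bigl(\tfrac{n}{2}\gamma_n,\,n+\tfrac{1}{4\alpha^3}\bigr)>\max\bigl(5n,\,n+\tfrac{n^3}{32}\bigr)$ for $n\geqslant 4$, whereas your inequality essentially needs $(1+\tfrac{n-1}{M})^{n-1}<3(1-\tfrac{2n-2}{M})$, i.e.\ $M\gtrsim(n-1)^2$. Concretely, take $n=15$, $\alpha$ slightly below $\tfrac{2}{15}$ and $\log C_K=15\alpha+\tfrac{1}{4\alpha^2}\approx 16.06$ (all hypotheses hold): then $M\approx 120.5$ and $\frac{(M+14)^{14}}{3M^{13}(M-28)}\approx\frac{(1.116)^{14}}{2.30}\approx 2.02$, so your upper bound for $\kappa_K$ is about twice the target. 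The failure occurs for roughly $6\leqslant n\leqslant 28$; the $o_L(1)$ heuristic is valid for fixed $n$ as $L\to\infty$, but the theorem requires uniformity in $n$ and the thresholds $\gamma_n$ do not make $L$ large compared with $n^2$.

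The root cause is that the unweighted divisor sum costs a factor $(1+\tfrac{n-1}{\log x})^{n-1}\approx e^{(n-1)^2/\log x}$, which is not close to $1$ unless $\log x\gg n^2$. This is exactly what the paper's detour through Perron's formula with kernel $x^s/s^2$ buys: it replaces $\sum_{m\leqslant x}r_K(m)$ by the logarithmically weighted sum $\sum_{m\leqslant x}r_K(m)\log\frac{x}{m}$, for which Lemma~\ref{le:inequality_weighted_log} gives $\frac{x\log x(\log x+n-1)^{n-2}}{(n-1)!}$; choosing $x=C_K^{1/\alpha}e^{1-n}$ so that $\log x+n-1=M$ exactly, this equals $\frac{x}{(n-1)!}\bigl(M^{n-1}-(n-1)M^{n-2}\bigr)$ with no exponential loss, at the price of the contour shift and Ayoub's mean-value theorem to recover $\kappa_Kx$ from the weighted sum with only a factor $2$. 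If you want to keep your elementary framework, the fix is to use the identity $\sum_{m\leqslant x}r_K(m)\log\frac{x}{m}=\int_1^x t^{-1}\sum_{m\leqslant t}r_K(m)\,\mathrm{d}t=\kappa_K(x-1)+\int_1^x\Delta_K(t)t^{-1}\,\mathrm{d}t$ together with \eqref{e3} and the weighted divisor bound; but as written, with the unweighted sum, the proof does not close.
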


\section{Tools}

The first lemma is a Titchmarsh like generalization of \cite[Theorem~12.5]{tit} to number fields established by Ayoub \cite{ayo}. The result is stated for the quadratic case, but as it can be seen in the proof and as the author points it out, it is still true for the general case.

\begin{lem}
\label{le1}
Let $n \geqslant 3$ and $\mu_K$ be the infimum of the real numbers $\sigma$ for which the integral
$$\int_{- \infty}^\infty \frac{| \zeta_K ( \sigma + it) |^2}{| \sigma + it |^2} \, \mathrm{d}t$$
converges. Then $\mu_K \leqslant 1 - \frac{2}{n}$ and, for all $\mu_K < \sigma < 1$, we have
$$\frac{1}{2 \pi} \int_{- \infty}^\infty \frac{| \zeta_K ( \sigma + it) |^2}{| \sigma + it |^2} \, \mathrm{d}t = \int_0^\infty \Delta_K(x)^2 x^{-1-2\sigma} \, \mathrm{d}x.$$
\end{lem}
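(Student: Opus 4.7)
The plan is to follow Titchmarsh's Mellin-transform / Plancherel approach (Chapter~12 of \cite{tit}), as Ayoub \cite{ayo} adapts it to the quadratic setting, extended here to arbitrary degree $n \geqslant 3$. Starting from the Dirichlet series $\zeta_K(s) = \sum_{m \geqslant 1} r_K(m) m^{-s}$ (valid for $\re s > 1$), Abel summation on $\sum_{m \leqslant x} r_K(m) = \kappa_K x + \Delta_K(x)$, combined with the trivial remark that $\Delta_K(x) = -\kappa_K x$ for $0 < x < 1$, produces the Mellin-type identity
\[
\frac{\zeta_K(s)}{s} = \int_0^\infty \Delta_K(x) \, x^{-s-1} \, \mathrm{d}x,
\]
in which the two half-line contributions $\pm \kappa_K/(s-1)$ cancel exactly. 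The right-hand integral converges a priori only for $\re s > 1$, but the identity extends by analytic continuation to any vertical strip on which the integral still converges.

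To secure the bound $\mu_K \leqslant 1 - 2/n$, I would work on the right-hand side of the prospective Parseval identity. The key input is a mean-square estimate for $\Delta_K(x)$ of classical Voronoi / Landau type, derived from the functional equation of $\zeta_K$ together with the Phragm\'en--Lindel\"of convexity bound $\zeta_K(\sigma+it) \ll (1+|t|)^{n(1-\sigma)/2+\varepsilon}$; alternatively, one can argue on the other side via a Hardy--Littlewood-type second moment for $\zeta_K$ on vertical lines inside the strip. Either route shows, by dyadic decomposition, that the two integrals
\[
\int_0^\infty \Delta_K(x)^2 \, x^{-1-2\sigma}\, \mathrm{d}x \qquad \text{and} \qquad \int_{-\infty}^\infty \frac{|\zeta_K(\sigma+it)|^2}{|\sigma+it|^2}\, \mathrm{d}t
\]
are both finite whenever $\sigma > 1 - 2/n$, which gives $\mu_K \leqslant 1 - 2/n$ and supplies the $L^2$ framework required for the next step.

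With $L^2$-integrability in hand on both sides, the Parseval identity is a direct Plancherel computation. The substitution $x = e^u$ recasts the Mellin identity as a Fourier transform on $\R$,
\[
\frac{\zeta_K(\sigma+it)}{\sigma+it} = \int_{-\infty}^\infty \Delta_K(e^u)\, e^{-\sigma u}\, e^{-itu}\, \mathrm{d}u,
\]
exhibiting $t \mapsto \zeta_K(\sigma+it)/(\sigma+it)$ as the Fourier transform of the $L^2$ function $u \mapsto \Delta_K(e^u)\, e^{-\sigma u}$. Plancherel's theorem then gives
\[
\frac{1}{2\pi} \int_{-\infty}^\infty \frac{|\zeta_K(\sigma+it)|^2}{|\sigma+it|^2}\, \mathrm{d}t = \int_{-\infty}^\infty \Delta_K(e^u)^2\, e^{-2\sigma u}\, \mathrm{d}u = \int_0^\infty \Delta_K(x)^2 \, x^{-1-2\sigma} \, \mathrm{d}x,
\]
which is exactly the identity claimed.

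The main obstacle is really the mean-square estimate that pins down the precise threshold $1 - 2/n$: the degree $n$ enters there through the conductor in the functional equation of $\zeta_K$, and one has to be careful with the uniformity in $\sigma$ as $\sigma \downarrow 1 - 2/n$ (the pointwise convexity bound alone would only yield $\mu_K \leqslant 1 - 1/n$). Once the $L^2$-integrability is secured on both sides of the strip, the Parseval/Plancherel step itself is entirely formal.
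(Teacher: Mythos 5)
Your proposal follows essentially the same route as the paper: the threshold $\mu_K \leqslant 1 - \frac{2}{n}$ is obtained from a mean-value estimate for $\zeta_K$ on vertical lines inside the critical strip combined with a dyadic decomposition (the paper cites \cite[Theorem~4]{cha}, which is precisely the Hardy--Littlewood-type second moment you invoke as your second alternative), and the identity itself is Parseval/Plancherel for the Mellin transform after the substitution $x = e^u$. One small correction to your setup: the integral $\int_0^\infty \Delta_K(x)\, x^{-s-1}\, \mathrm{d}x$ does \emph{not} converge a priori for $\re s > 1$ --- the piece over $(0,1)$, where $\Delta_K(x) = -\kappa_K x$, forces $\re s < 1$, so the full integral converges only in the strip $\mu_K < \re s < 1$, where the contribution $\kappa_K/(s-1)$ from $(0,1)$ cancels against the analytically continued tail to give $\zeta_K(s)/s$; this does not affect the rest of your argument.
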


\begin{proof}
For the first part of the lemma, it suffices to show that
$$\int_{T}^\infty \frac{| \zeta_K ( \sigma + it) |^2}{| \sigma + it |^2} \, \mathrm{d}t$$
tends to zero as $T \to \infty$ uniformly for $\sigma > 1 - \frac{2}{n}$. Splitting the interval $[T, \infty)$ into dyadic subintervals of the shape $[2^k T, 2^{k+1}T)$ ($k \geqslant 0$), we see that the above result is a consequence of an estimate of the form
$$\int_{T/2}^T \frac{| \zeta_K ( \sigma + it) |^2}{| \sigma + it |^2} \, \mathrm{d}t \ll T^{- \eta}$$
for some $\eta  > 0$. But by \cite[Theorem~4]{cha}, we have uniformly for $\sigma \geqslant 1 - \frac{2}{n} + \eta$
$$\int_{1}^T | \zeta_K ( \sigma + it) |^2 \, \textrm{d}t  \ll \left( T + T^{n(1-\sigma)} \right) (\log T)^n $$
implying the desired bound. The second part of the lemma follows from Parseval's formula for the Mellin's transform.
\end{proof}

\begin{coro}
\label{co1}
Assume hypothesis \eqref{e3}, and let $0 < \delta < \alpha < \frac{2}{n}$. Then
$$\frac{1}{2 \pi} \int_{- \infty}^\infty \frac{| \zeta_K ( 1 - \delta + it) |}{| 1-\delta + it |^2} \, \mathrm{d}t \leqslant \frac{1}{2\sqrt{1-\delta}} \left( \frac{\kappa_K}{\sqrt{\delta}} + \frac{C_K}{\sqrt{\alpha-\delta}} \right).$$
\end{coro}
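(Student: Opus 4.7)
\textbf{Proof plan for Corollary \ref{co1}.} The goal is to get a linear-in-$|\zeta_K|$ bound, while Lemma \ref{le1} only hands us a quadratic identity; the natural bridge is Cauchy--Schwarz. I would write
\[
\frac{|\zeta_K(1-\delta+it)|}{|1-\delta+it|^2}=\frac{|\zeta_K(1-\delta+it)|}{|1-\delta+it|}\cdot\frac{1}{|1-\delta+it|}
\]
and apply Cauchy--Schwarz. The first factor then becomes $\bigl(\int_{-\infty}^\infty |1-\delta+it|^{-2}\,dt\bigr)^{1/2}$, which is an elementary integral equal to $\sqrt{\pi/(1-\delta)}$. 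The second factor is exactly the quantity controlled by Lemma \ref{le1}, because the hypothesis $\delta<\alpha<2/n$ gives $1-\delta>1-2/n\geqslant\mu_K$, so the Parseval identity there is applicable.

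The core of the argument is then to estimate
\[
\int_0^\infty \Delta_K(x)^2\,x^{-1-2(1-\delta)}\,\mathrm{d}x=\int_0^\infty \Delta_K(x)^2\,x^{2\delta-3}\,\mathrm{d}x.
\]
I would split this integral at $x=1$. For $0<x<1$ the sum $\sum_{m\leqslant x}r_K(m)$ is empty, so $\Delta_K(x)=-\kappa_K x$ and the contribution is
\[
\kappa_K^2\int_0^1 x^{2\delta-1}\,\mathrm{d}x=\frac{\kappa_K^2}{2\delta}.
\]
For $x\geqslant 1$ hypothesis \eqref{e3} gives $\Delta_K(x)^2\leqslant C_K^2 x^{2-2\alpha}$, and because $\delta<\alpha$ the integral
\[
C_K^2\int_1^\infty x^{2\delta-2\alpha-1}\,\mathrm{d}x=\frac{C_K^2}{2(\alpha-\delta)}
\]
converges. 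Combining, Lemma \ref{le1} yields
\[
\frac{1}{2\pi}\int_{-\infty}^\infty\frac{|\zeta_K(1-\delta+it)|^2}{|1-\delta+it|^2}\,\mathrm{d}t\leqslant\frac{\kappa_K^2}{2\delta}+\frac{C_K^2}{2(\alpha-\delta)}.
\]

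Plugging the two factors into Cauchy--Schwarz gives
\[
\frac{1}{2\pi}\int_{-\infty}^\infty\frac{|\zeta_K(1-\delta+it)|}{|1-\delta+it|^2}\,\mathrm{d}t\leqslant\frac{1}{2\sqrt{1-\delta}}\sqrt{\frac{\kappa_K^2}{\delta}+\frac{C_K^2}{\alpha-\delta}},
\]
and a final application of the elementary inequality $\sqrt{a^2+b^2}\leqslant a+b$ for $a,b\geqslant 0$ produces exactly the stated bound. I do not foresee any genuine obstacle here; the only point deserving care is the interchange of the quadratic identity with a linear estimate, which is precisely what Cauchy--Schwarz is for, and checking the convergence condition $\delta<\alpha$ at the tail integral $\int_1^\infty x^{2\delta-2\alpha-1}\,\mathrm{d}x$.
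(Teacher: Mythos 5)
Your proposal is correct and follows essentially the same route as the paper: split the Parseval integral from Lemma~\ref{le1} at $x=1$ using the trivial bound $|\Delta_K(x)|\leqslant\kappa_K x$ on $[0,1)$ and hypothesis \eqref{e3} on $[1,\infty)$, then apply Cauchy--Schwarz with the factorization $|\zeta_K|/|s|^2=(|\zeta_K|/|s|)\cdot(1/|s|)$ and finish with $\sqrt{a^2+b^2}\leqslant a+b$. Your explicit check that $1-\delta>1-\tfrac{2}{n}\geqslant\mu_K$ justifies applying Lemma~\ref{le1}, a point the paper leaves implicit.
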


\begin{proof}
Using Lemma~\ref{le1}, \eqref{e3} and the trivial bound $\left | \Delta_K(x) \right | \leqslant \kappa_K x$ when $x \in \left[ 0,1 \right)$
\begin{align*}
   \frac{1}{2 \pi} \int_{- \infty}^\infty \frac{| \zeta_K ( 1 - \delta + it) |^2}{| 1-\delta + it |^2} \, \textrm{d}t & \leqslant  \kappa_K^2 \int_0^1 x^{2 \delta - 1} \textrm{d}x + C_K^2 \int_1^\infty x^{2 (\delta-\alpha) - 1 } \textrm{d}x \\
   & =  \frac{1}{2} \left( \frac{\kappa_K^2}{\delta} + \frac{C_K^2}{\alpha-\delta} \right) 
\end{align*}
and using Cauchy-Schwarz's inequality we get
\begin{align*}
   &  \frac{1}{2 \pi} \int_{- \infty}^\infty \frac{| \zeta_K ( 1 - \delta + it) |}{| 1-\delta + it |^2} \, \mathrm{d}t \\
   & \quad \leqslant  \frac{1}{2 \pi} \left( \int_{- \infty}^\infty \frac{| \zeta_K ( 1 - \delta + it) |^2}{| 1-\delta + it |^2} \, \textrm{d}t \right)^{1/2} \left( \int_{- \infty}^\infty \frac{\textrm{d}t}{ |1-\delta + it |^2} \right)^{1/2} \\
   & \quad \quad \leqslant  \frac{1}{2 \sqrt{1 - \delta}} \left( \frac{\kappa_K^2}{\delta} + \frac{C_K^2}{\alpha - \delta} \right)^{1/2} \leqslant  \frac{1}{2\sqrt{1-\delta}} \left( \frac{\kappa_K}{\sqrt{\delta}} + \frac{C_K}{\sqrt{\alpha-\delta}} \right)
\end{align*}
 as asserted.
\end{proof}

\begin{lem}
\label{le:inequality_weighted_log}
Uniformly for all $x \geqslant 1$ and all $n \in \Z_{\geqslant 1}$, we have
$$\sum_{m \leqslant x} r_K (m) \log \frac{x}{m} \leqslant \frac{x \log x}{(n-1)!} \left( \log x+ n -1\right)^{n-2}.$$
\end{lem}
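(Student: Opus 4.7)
The plan is to reduce the inequality to one for the generalised divisor function $d_n$ and then to establish the reduced statement by induction on $n$. Since $r_K$ is multiplicative and $r_K(p^k) \leqslant \binom{n+k-1}{n-1} = d_n(p^k)$ for every prime $p$ (with equality when $p$ splits completely in $\mathcal{O}_K$), one has the pointwise bound $r_K(m) \leqslant d_n(m)$, so it suffices to show
$$S_n(x) := \sum_{m \leqslant x} d_n(m) \log(x/m) \leqslant \frac{x \log x\, (\log x + n-1)^{n-2}}{(n-1)!}$$
for every $x \geqslant 1$ and $n \geqslant 2$.

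For the base case $n = 2$, Abel summation yields $S_2(x) = \int_1^x D_2(t)/t\, \mathrm{d}t$ with $D_2(t) = \sum_{m \leqslant t} d(m)$, and the estimate $D_2(t) \leqslant t H_{\lfloor t \rfloor} \leqslant t (\log t + 1)$ combined with the exact computation $\int_1^x (\log t + 1)\, \mathrm{d}t = x \log x$ closes this case.

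For the inductive step, the convolution identity $d_n = \mathbf{1} \ast d_{n-1}$ furnishes the Dirichlet hyperbola relation $S_n(x) = \sum_{k \leqslant x} S_{n-1}(x/k)$. Applying the inductive hypothesis and using that the resulting summand $f(u) = \log(x/u)(\log(x/u) + n-2)^{n-3}/u$ is decreasing on $[1,x]$ for $n \geqslant 3$, I bound the sum by $f(1) + \int_1^x f(u)\, \mathrm{d}u$. The substitution $v = \log(x/u)$ evaluates the integral explicitly, and after collecting terms the inductive step is reduced to the polynomial inequality
$$(n-1) L A^{n-3} + A^{n-2}(L-1) + (n-2)^{n-2} \leqslant L(A+1)^{n-2}$$
for all $L := \log x \geqslant 0$ and $A := L + (n-2)$.

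The main obstacle is establishing this polynomial inequality. Expanding $(A+1)^{n-2}$ by the binomial theorem and writing $A^{n-2} - (n-2)^{n-2} = L \sum_{k=0}^{n-3} \binom{n-2}{k} L^{n-3-k}(n-2)^k$, one sees that the difference of the two sides factors as $L$ times a polynomial in $L$ whose coefficients are nonnegative; this nonnegativity reduces to Pascal's inequality $\binom{n-2}{k} \geqslant \binom{n-3}{k}$. The verification is purely algebraic, and the pattern is already transparent from the small cases $n = 3, 4, 5$, where the difference RHS $-$ LHS equals $0$, $3L$, and $6L^2 + 28L$ respectively.
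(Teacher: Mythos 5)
Your proof is correct, and it takes a genuinely different route from the paper's after the common first step. Both arguments open with the reduction $r_K(m)\leqslant \tau_n(m)$ (your $d_n$), but the paper then writes the weighted sum as $\int_1^x t^{-1}\sum_{m\leqslant t}\tau_n(m)\,\mathrm{d}t$, imports the unweighted bound $\sum_{m\leqslant t}\tau_n(m)\leqslant t\sum_{j=0}^{n-1}\binom{n-1}{j}(\log t)^j/j!$ from a cited reference, evaluates the integrals exactly to reach $x\sum_{k=1}^{n-1}\binom{n-2}{k-1}(\log x)^k/k!$, and concludes with the arithmetic--geometric mean estimate $\frac{1}{(k+1)!}\leqslant \frac{(n-1)^{n-k-2}}{(n-1)!}$. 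You instead run an induction on $n$ directly on the weighted sum via the hyperbola identity $S_n(x)=\sum_{k\leqslant x}S_{n-1}(x/k)$, an integral comparison for the decreasing summand, and a polynomial inequality. I verified the key points: the substitution $v=\log(x/u)$ does give $\int_1^x f=\frac{A^{n-1}-(n-2)^{n-1}}{n-1}-A^{n-2}+(n-2)^{n-2}$ with $A=L+n-2$, so your reduced inequality $(n-1)LA^{n-3}+A^{n-2}(L-1)+(n-2)^{n-2}\leqslant L(A+1)^{n-2}$ is exactly what comes out, and its difference equals $L\bigl(\sum_{j=0}^{n-4}\binom{n-2}{j}A^j+\sum_{k=0}^{n-3}\bigl(\binom{n-2}{k}-\binom{n-3}{k}\bigr)L^{n-3-k}(n-2)^k\bigr)$, which is indeed nonnegative by Pascal's rule (and reproduces your values $0$, $3L$, $6L^2+28L$ for $n=3,4,5$). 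Your version is self-contained, not relying on the cited divisor-sum exercise, at the price of a longer algebraic verification; the paper's is shorter on the page because the analogous induction is outsourced to the reference, and its final step trades Pascal for AM--GM. Two minor points: the lemma is stated for all $n\in\Z_{\geqslant 1}$ while your induction starts at $n=2$ (for $n=1$ the claim reads $\sum_{m\leqslant x}\log(x/m)\leqslant x$, immediate from $\int_1^x\lfloor t\rfloor t^{-1}\,\mathrm{d}t\leqslant x-1$), and it would be worth spelling out the one-line verification that the bracketed polynomial above has nonnegative coefficients rather than leaving it at the small cases.
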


\begin{proof}
Let $\tau_n$ be the $n$th Piltz-Dirichlet divisor function. We have $r_K (m) \leqslant \tau_n(m)$ and from the bound \cite[Exercise~78]{bor} 
$$\sum_{m \leqslant t} \tau_n(m) \leqslant t \sum_{j=0}^{n-1} \binom{n-1}{j} \frac{(\log t)^j}{j!} \quad \left( t \geqslant 1 \right)$$
so that
\begin{align*}
   \sum_{m \leqslant x} r_K (m) \log \frac{x}{m} & =  \int_1^x \frac{1}{t} \left( \sum_{m \leqslant t} r_K(m) \right) \, \textrm{d}t \\
   & \leqslant  \sum_{j=0}^{n-1} \binom{n-1}{j} \frac{1}{j!} \int_1^x (\log t)^j \, \textrm{d}t \\
   &= x \sum_{j=0}^{n-1} \binom{n-1}{j} (-1)^j \left\lbrace \sum_{k=0}^j \frac{(- \log x)^k}{k!} - \frac{1}{x} \right\rbrace  \\
   &= - x \sum_{j=0}^{n-1} \binom{n-1}{j} (-1)^j \sum_{k=j+1}^\infty \frac{(- \log x)^k}{k!} \\
   &= - x \sum_{k=1}^\infty \frac{(- \log x)^k}{k!} \sum_{j=0}^{\min(n-1,k-1)} (-1)^j \binom{n-1}{j} \\*
   &= -x \sum_{k=1}^\infty (-1)^{k+\min(n-1,k-1)}\frac{(\log x)^k}{k!} \binom{n-2}{\min(n-1,k-1)} \\
   &= x \sum_{k=1}^{n-1} \frac{(\log x)^k}{k!} \binom{n-2}{k-1} \\
   & \leqslant  \frac{x \log x}{(n-1)!} \left( \log x+ n -1\right)^{n-2}
\end{align*}
where we used the fact that, for $0 \leqslant k \leqslant n-3$
\begin{align*}
   \frac{1}{(k+1)!} &= \frac{1}{(n-1)!} \prod_{i=2}^{n-k-1} (i+k) \\
   & \leqslant  \frac{1}{(n-1)!} \left( \frac{1}{n-k-2} \sum_{i=2}^{n-k-1} (i+k) \right)^{n-k-2} \\
   & \leqslant  \frac{(n-1)^{n-k-2}}{(n-1)!}
\end{align*}
by the geometric-arithmetic mean inequality, the bound still being valid in the case $k= n-2$.
\end{proof}

\begin{lem}
\label{le:tech}
If $n \geqslant 3$, $0 < \alpha < \frac{2}{n}$ and $\log C_K \geqslant \gamma_n$, then 
$$\frac{\left( \alpha^{-1} \log C_K  \right)^{n-2}}{(n-2)!} \geqslant \frac{e^{\alpha(n-1)+1} \sqrt{\log C_K}}{\sqrt{\alpha(1-\alpha)}}.$$
\end{lem}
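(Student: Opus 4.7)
Setting $L := \log C_K$ for brevity, the plan is to divide both sides of the target inequality by $e^{\alpha(n-1)+1}\sqrt{L}/\sqrt{\alpha(1-\alpha)}$ to put it in the equivalent form $\phi(\alpha) \geqslant 1$, where
$$\phi(\alpha) := \frac{(L/\alpha)^{n-5/2}\sqrt{1-\alpha}}{(n-2)!\, e^{\alpha(n-1)+1}}.$$

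The first step is to observe that $\phi$ is monotonically decreasing on $(0,2/n)$. Since $n \geqslant 3$ forces $n-5/2 \geqslant 1/2 > 0$, the logarithmic derivative
$$\frac{\phi'(\alpha)}{\phi(\alpha)} = -\frac{n-5/2}{\alpha} - \frac{1}{2(1-\alpha)} - (n-1)$$
is strictly negative on $(0,1)$. Hence $\phi(\alpha) \geqslant \phi(2/n)$ throughout $(0,2/n]$, so it suffices to verify the single inequality $\phi(2/n) \geqslant 1$, which (using $1-2/n = (n-2)/n$ and $2(n-1)/n + 1 = 3-2/n$) amounts to
$$\left(\frac{nL}{2}\right)^{n-5/2} \geqslant (n-2)!\, e^{3-2/n}\sqrt{\frac{n}{n-2}}. \qquad (\star)$$

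The final step is to verify $(\star)$ in each of the two cases singled out by the definition of $\gamma_n$. For $n = 3$, $(\star)$ reduces (upon squaring) to $L \geqslant 2e^{14/3}$; since $e^{14/3} < 106.34$ we have $2e^{14/3} < 213$, so the choice $\gamma_3 = 214$ is sufficient (and essentially sharp). For $n \geqslant 4$ I would set $F(n) := (5n)^{n-5/2}/(n-2)!$ and $G(n) := e^{3-2/n}\sqrt{n/(n-2)}$, so that using $L \geqslant 10$ it suffices to show $F(n) \geqslant G(n)$ for every $n \geqslant 4$. I would dispatch this by induction: the base case $n = 4$ simplifies to $2000 \geqslant 2e^5$ (note $e^5 < 149$), and the inductive step follows from
$$\frac{F(n+1)}{F(n)} = \frac{5n\,(1+1/n)^{n-3/2}}{n-1} \geqslant 11 \qquad (n \geqslant 4),$$
which comfortably dominates $G(n+1)/G(n) = e^{2/[n(n+1)]}\sqrt{1 - 2/[n(n-1)]} \leqslant e^{0.1} < 2$.

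The only real obstacle is the $n = 3$ case, where the threshold $2e^{14/3} \approx 212.68$ lies only just below $\gamma_3 = 214$, so the constant has to be chosen with some care; by contrast, for $n \geqslant 4$ there is ample slack, which is why a single constant $\gamma_n = 10$ suffices uniformly for all higher degrees.
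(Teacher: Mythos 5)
Your proposal is correct and follows essentially the same route as the paper: both arguments hinge on the observation that the relevant function of $\alpha$ is monotonic on $\left(0,\tfrac{2}{n}\right)$ (your logarithmic derivative computation is the paper's, up to sign and squaring), reduce to the endpoint $\alpha = \tfrac{2}{n}$, and then verify the resulting single inequality in $n$ against the thresholds $\gamma_n$ --- the paper finishing with Stirling's bound and monotonicity in $n$, you with an induction on $n$. Two harmless nitpicks: $e^{14/3} \approx 106.343$, so your stated bound $e^{14/3} < 106.34$ is off in the last digit (though the conclusion $2e^{14/3} < 213 < 214$ still holds), and the claim $F(n+1)/F(n) \geqslant 11$ deserves a word of justification since it is a product of a decreasing factor and an increasing one --- but the trivial bound $F(n+1)/F(n) > \frac{5n}{n-1} > 5 > G(n+1)/G(n)$ already closes the induction, so nothing is at stake.
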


\begin{proof}
Squaring the inequality of the lemma, it is equivalent to show
$$(\log C_K)^{2n-5} \geqslant ((n-2)!)^2 \times \frac{\alpha^{2n-5} e^{2\alpha(n-1)+2}}{1-\alpha}.$$
The function $\alpha \in \left( 0, \frac{2}{n} \right) \mapsto \frac{\alpha^{2n-5} e^{2\alpha(n-1)+2}}{1-\alpha}$ is non-decreasing, so that
$$\frac{\alpha^{2n-5} e^{2\alpha(n-1)+2}}{1-\alpha} \leqslant \frac{2^{2n-5}e^{6-4/n}n^{6-2n}}{n-2}$$
and therefore it suffices to show
$$\log C_K \geqslant 2 \left( \frac{e^{6-4/n}n^{6-2n}}{n-2} \right)^{\frac{1}{2n-5}} ((n-2)!)^{\frac{2}{2n-5}}.$$
Using Stirling's bound, the inequality of the lemma is guaranteed as soon as
$$\log C_K \geqslant 2 (2 \pi)^{\frac{1}{2n-5}} e^{- \frac{12n^3-84n^2+143n-48}{6n(n-2)(2n-5)}} \left( \frac{(n-2)^{2n-4}}{n^{2n-6}}\right)^{\frac{1}{2n-5}}$$
and since the right-hand side is a non-increasing function in $n \geqslant 3$, it then suffices that $\log C_K \geqslant s_n$ where $s_3 := 4 \pi e^{17/6} \approx \np{213.7}$ and $s_n = 2^{4/3} \pi^{1/3} e^{13/36} \approx \np{5.3}$ whenever $n \geqslant 4$.
\end{proof}

\section{Proof of the main result}

By \eqref{e1} it is sufficient to show that
\begin{equation}
   \kappa_K < 3 \left( \frac{\left( \alpha^{-1} \log C_K  \right)^{n-1}}{(n-1)!} - \frac{2\left( \alpha^{-1} \log C_K  \right)^{n-2}}{(n-2)!} \right). \label{e4}
\end{equation}

Assume $n \geqslant 3$, and let $0 < \delta < \alpha < \frac{2}{n}$ and $x \geqslant 1$ satisfying
\begin{equation}
   \delta (1-\delta) \geqslant x^{-2 \delta} \label{eq:condition_1}
\end{equation}
and
\begin{equation}
x \geqslant e^{1 + \frac{1}{4 \alpha^3}}. \label{eq:condition_2}
\end{equation}
By Perron's formula (see \cite[(2.9) page~220]{ten} for instance) we have
$$\sum_{m \leqslant x} r_K (m) \log \frac{x}{m} = \frac{1}{2 \pi i} \int_{2-i \infty}^{2+i \infty} \frac{\zeta_K(s)}{s^2} \, x^s \, \textrm{d}s.$$
Shifting the contour integration to the line $\sigma = 1 - \delta$ and picking up the residue of the integrand at the unique simple pole $s=1$, we obtain by Cauchy's theorem
\begin{align*}
   \sum_{m \leqslant x} r_K (m) \log \frac{x}{m} & =  \kappa_K x + \frac{1}{2 \pi i} \int_{1 - \delta-i \infty}^{1-\delta+i \infty} \frac{\zeta_K(s)}{s^2} \, x^s \, \textrm{d}s \\
   & :=  \kappa_K x + I_\delta (x)
\end{align*} 
and using Corollary~\ref{co1} we get
\begin{align*}
   \left | I_\delta (x) \right | & \leqslant  \frac{x^{1- \delta}}{2 \pi} \int_{- \infty}^\infty \frac{| \zeta_K ( 1 - \delta + it) |}{| 1-\delta + it |^2} \, \mathrm{d}t \\
   & \leqslant  \frac{x^{1-\delta}}{2\sqrt{1-\delta}} \left( \frac{\kappa_K}{\sqrt{\delta}} + \frac{C_K}{\sqrt{\alpha-\delta}} \right).
\end{align*}
Therefore, using \eqref{eq:condition_1}, we derive
\begin{align*}
   \sum_{m \leqslant x} r_K (m) \log \frac{x}{m} & \geqslant \kappa_K x \left( 1-\frac{x^{-\delta}}{2\sqrt{\delta (1-\delta)}}\right) -\frac{x^{1-\delta}}{2} \frac{C_K}{\sqrt{(1-\delta)(\alpha-\delta)}} \\
   & \geqslant \frac{\kappa_K x}{2}  -\frac{x^{1-\delta}}{2} \frac{C_K}{\sqrt{(1-\delta)(\alpha-\delta)}}
\end{align*}
and Lemma~\ref{le:inequality_weighted_log} yields
\begin{align*}
   \kappa_K & \leqslant \frac{2}{x} \sum_{m \leqslant x} r_K (m) \log \frac{x}{m} + \frac{x^{-\delta}C_K}{\sqrt{(1-\delta)(\alpha-\delta)}} \\
   & \leqslant \frac{2 \log x}{(n-1)!} \left( \log x+ n -1\right)^{n-2} + \frac{x^{-\delta}C_K}{\sqrt{(1-\delta)(\alpha-\delta)}}    
\end{align*}
whenever $x$ and $\delta$ satisfy \eqref{eq:condition_1}. Now choose
\begin{equation}
   \delta = \alpha - \frac{1}{\log x}. \label{eq:choice_delta}
\end{equation}
Note that \eqref{eq:condition_1} is satisfied if $\left( \alpha \log x - 1 \right) \left(1 + (1-\alpha) \log x \right) \geqslant e^2 (\log x)^2 x^{-2\alpha}$. But using \eqref{eq:condition_2} we get
$$\left( \alpha \log x - 1 \right) \left(1 + (1-\alpha) \log x \right) \geqslant \frac{1}{16 \alpha^2} \geqslant e^2 (\log x)^2 x^{-2\alpha}.$$
Therefore, with the choice \eqref{eq:choice_delta}, we derive
\begin{align*}
   \kappa_K & \leqslant \frac{2 \log x}{(n-1)!} \left( \log x+ n -1\right)^{n-2} + \frac{e \, C_K x^{-\alpha} \log x}{\sqrt{(1-\alpha) \log x + 1}} \\
   & <     \frac{2 \log x}{(n-1)!} \left( \log x+ n -1\right)^{n-2} + \frac{e \, C_K x^{-\alpha} }{\sqrt{1-\alpha}} \sqrt{\log x}
\end{align*}
provided that \eqref{eq:condition_2} is fullfilled. We next choose
\begin{equation}
   x = C_K^{1/\alpha} e^{1-n}. \label{eq:choice_x}
\end{equation}
This yields
\begin{align*}
   \kappa_K & < 2 \left( \frac{\left( \alpha^{-1} \log C_K  \right)^{n-1}}{(n-1)!} - \frac{\left( \alpha^{-1} \log C_K  \right)^{n-2}}{(n-2)!} \right) + \frac{e^{\alpha(n-1)+1} \sqrt{\log C_K + \alpha(1-n)}}{\sqrt{\alpha(1-\alpha)}} \\
   & < 2 \left( \frac{\left( \alpha^{-1} \log C_K  \right)^{n-1}}{(n-1)!} - \frac{\left( \alpha^{-1} \log C_K  \right)^{n-2}}{(n-2)!} \right) + \frac{e^{\alpha(n-1)+1} \sqrt{\log C_K}}{\sqrt{\alpha(1-\alpha)}}.
\end{align*}
Now, since $\log C_K \geqslant \gamma_n \geqslant 10 > 10 - \frac{10}{n} \geqslant 5 \alpha(n-1)$, we derive using Lemma~\ref{le:tech}
\begin{align*}
   \frac{\left( \alpha^{-1} \log C_K  \right)^{n-1}}{(n-1)!} - \frac{4\left( \alpha^{-1} \log C_K  \right)^{n-2}}{(n-2)!} &= \frac{\left( \alpha^{-1} \log C_K  \right)^{n-2}}{(n-2)!} \left( \frac{\alpha^{-1}\log C_K}{n-1} - 4 \right) \\
   & > \frac{\left( \alpha^{-1} \log C_K  \right)^{n-2}}{(n-2)!} \\
   & \geqslant \frac{e^{\alpha(n-1)+1} \sqrt{\log C_K}}{\sqrt{\alpha(1-\alpha)}}
\end{align*}
and therefore
$$\kappa_K < 3 \left( \frac{\left( \alpha^{-1} \log C_K  \right)^{n-1}}{(n-1)!} - \frac{2\left( \alpha^{-1} \log C_K  \right)^{n-2}}{(n-2)!} \right)$$
as required.
\qed

\section{Example}

Improving a result in Sunley's thesis \cite{sun}, Lee \cite{lee22} proved that, for all $x > 0$
$$\left | \Delta_K(x) \right | \leqslant \Theta_K \, d_K^{\frac{1}{n+1}} (\log d_K)^{n-1} x^{1-\frac{2}{n+1}}$$
with $\Theta_K := \np{0.17} \left( \frac{6n-2}{n-1} \right) \np{2.26}^n \, e^{4n+26/n} \, n^{n+1/2} \left( \np{44.39} \times \np{0.082}^n \, n! + \frac{13}{n-1} \right)$. Hence one can take $\alpha = \frac{2}{n+1}$ and
$$C_K := \Theta_K \, d_K^{\frac{1}{n+1}} (\log d_K)^{n-1}.$$
The hypothesis $\log C_K \geqslant \max \left( \gamma_n \, , \, \alpha n + \frac{1}{4 \alpha^2}\right)$ is easily fullfilled, and noticing that, for any $n \geqslant 3$
$$\tfrac{1}{2} n^2 \log n < \tfrac{n+1}{4} \log \Theta_K < 3 n^2 \log n,$$
we infer that Theorem~\ref{t1} yields the following result.

\begin{coro}
\label{co2}
Let $K$ be an algebraic number field of degree $n \geqslant 3$. Then
$$h_K \mathcal{R}_K < \frac{3w_K}{2}  \left( \frac{2}{\pi} \right)^{r_2} \left(  \frac{\left (\tfrac{1}{4} \log d_K + L_K \right )^{n-1}}{(n-1)!} - \frac{\left (\tfrac{1}{4} \log d_K + \ell_K \right )^{n-2}}{(n-2)!} \right) d_K^{1/2}$$
where $\left. \begin{array}{c} L_K \\ \ell_K \end{array} \right\rbrace:= \tfrac{1}{4} (n^2-1) \log \log d_K + \left\lbrace \begin{array}{c} 3 n^2 \log n \\ \frac{1}{2} n^2 \log n. \end{array} \right.$
\end{coro}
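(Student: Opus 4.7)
The strategy is to specialize Theorem~\ref{t1} to Lee's explicit bound on $|\Delta_K(x)|$: one takes $\alpha = \frac{2}{n+1}$ and $C_K = \Theta_K\, d_K^{1/(n+1)}(\log d_K)^{n-1}$, and then repackages the quantity $\frac{1}{2\alpha}\log C_K = \frac{n+1}{4}\log C_K$ that appears in Theorem~\ref{t1} into a form which is explicit in $\log d_K$ and $\log\log d_K$. Nothing analytic is needed beyond what is already in Theorem~\ref{t1} and in Lee's bound; the corollary is essentially a careful substitution.

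The first step I would carry out is to verify the hypothesis $\log C_K \geqslant \max\bigl(\gamma_n,\, \alpha n + \tfrac{1}{4\alpha^2}\bigr)$ of Theorem~\ref{t1}. With the present choice, $\alpha n + \frac{1}{4\alpha^2} = \frac{2n}{n+1} + \frac{(n+1)^2}{16}$ grows only polynomially in $n$, whereas $\log\Theta_K$ already has order $n^2\log n$ by virtue of the two-sided estimate $\frac{1}{2}n^2\log n < \frac{n+1}{4}\log\Theta_K < 3 n^2\log n$ recalled just before the statement. Since $\log C_K \geqslant \log\Theta_K$, the hypothesis is immediate for every $n \geqslant 3$.

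The second step is the algebraic expansion
$$\frac{1}{2\alpha}\log C_K \;=\; \frac{n+1}{4}\log\Theta_K \;+\; \frac{1}{4}\log d_K \;+\; \frac{n^2-1}{4}\log\log d_K.$$
I would insert the upper estimate $\frac{n+1}{4}\log\Theta_K < 3n^2\log n$ into the positive $(n-1)$-th power appearing in the bracket of Theorem~\ref{t1}, and the lower estimate $\frac{n+1}{4}\log\Theta_K > \frac{1}{2}n^2\log n$ into the subtracted $(n-2)$-th power. These substitutions replace the arguments of the two powers by $\tfrac{1}{4}\log d_K + L_K$ and $\tfrac{1}{4}\log d_K + \ell_K$ respectively, producing exactly the bound claimed in Corollary~\ref{co2}.

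There is no genuine obstacle in this proof: the analytic difficulty has already been absorbed into Theorem~\ref{t1} and into Lee's external bound on $\Delta_K(x)$. The one point that deserves some care is the sign bookkeeping in the previous paragraph, namely that the first bracketed term of Theorem~\ref{t1} must be \emph{upper}-bounded while the subtracted term must be \emph{lower}-bounded, so that the two opposite uses of the two-sided estimate on $\log\Theta_K$ conspire to preserve an upper bound on $h_K\mathcal{R}_K$.
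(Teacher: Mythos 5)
Your overall route is exactly the paper's: substitute $\alpha = \tfrac{2}{n+1}$ and $C_K = \Theta_K\, d_K^{1/(n+1)}(\log d_K)^{n-1}$ into Theorem~\ref{t1}, expand
$$\frac{1}{2\alpha}\log C_K = \frac{n+1}{4}\log\Theta_K + \frac{1}{4}\log d_K + \frac{n^2-1}{4}\log\log d_K,$$
and use the two-sided estimate on $\frac{n+1}{4}\log\Theta_K$ with the upper bound inserted in the positive $(n-1)$-th power and the lower bound in the subtracted $(n-2)$-th power. That part, including the sign bookkeeping you emphasize, is correct and is precisely what the paper does.

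The gap is in your verification of the hypothesis $\log C_K \geqslant \gamma_n$. The two-sided estimate you invoke only gives $\log\Theta_K > \frac{2n^2\log n}{n+1}$, which is about $4.9$ for $n=3$ and about $8.9$ for $n=4$ --- both \emph{below} the corresponding thresholds $\gamma_3 = 214$ and $\gamma_4 = 10$, so "$\log C_K \geqslant \log\Theta_K$ and $\log\Theta_K$ has order $n^2\log n$" does not settle the matter. A direct computation from the displayed formula gives $\Theta_K \approx 4.6\times 10^{12}$ for $n=3$, i.e. $\log\Theta_K \approx 29$, still far short of $214$; hence for cubic fields the condition $\log C_K \geqslant \gamma_3$ forces $\tfrac{1}{4}\log d_K + 2\log\log d_K \geqslant 185$, which is certainly not "immediate for every $n\geqslant 3$". (The paper itself only asserts the hypothesis is "easily fulfilled" without detail, so you mirror its level of rigor, but your specific deduction is invalid and the $n=3$ case genuinely needs either a size restriction on $d_K$ or a separate argument.) A smaller point: the inequality $\log C_K \geqslant \log\Theta_K$ itself uses $\log d_K \geqslant 1$, which should be justified, e.g. by Minkowski's bound.
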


Note that Stirling's bound yields
\begin{multline*}
   h_K \mathcal{R}_K < \frac{3w_K}{2 \sqrt{2 \pi}}  \left( \frac{2}{\pi} \right)^{r_2} \left\lbrace \frac{1}{\sqrt{n-1}} \left( \frac{e \log d_K}{4n-4} + \frac{e L_K}{n-1} \right)^{n-1} \right. \\
   \left. - \frac{e^{-\frac{1}{12(n-2)}}}{\sqrt{n-2}} \left( \frac{e \log d_K}{4n-8} + \frac{e \ell_K}{n-2} \right)^{n-2} \right\rbrace d_K^{1/2}
\end{multline*}
which may be more easily compared to \eqref{eq:Louboutin}.

\subsection*{Acknowledgments}

The author warmly thanks Stephan R. Garcia and Ethan S. Lee for sending him Sunley's thesis \cite{sun} and Lee's preprint \cite{lee22}.

\end{document}